\documentclass[]{rmj-public-adapted}
\pdfoutput=1


\usepackage{latexsym}     
\usepackage{amssymb}      
\usepackage{booktabs}     
\usepackage[all]{xy}      
\usepackage{colonequals}  
\usepackage{pdflscape}    
\usepackage{afterpage}    

\newtheorem{theorem}{Theorem}[section]
\newtheorem{lemma}[theorem]{Lemma}
\newtheorem{definition}[theorem]{Definition}
\newtheorem{notation}[theorem]{Notation}
\newtheorem{convention}[theorem]{Convention}


\newcommand{\ZZ}{{\mathbf Z}}

\newcommand{\pz}{\phantom{0}}        
\newcommand{\ellipsis}{
.\kern0.08em
.\kern0.08em
.\@\kern0.08em\relax}

\makeatletter
\@namedef{subjclassname@2020}{%
  \textup{2020} Mathematics Subject Classification}
\makeatother


\title[Powers of $3$ with few nonzero bits]
      {Powers of $3$ with few nonzero bits \\ and a conjecture of Erd\H{o}s}
      
\author[Dimitrov]{Vassil S. Dimitrov}
\address[Dimitrov]{
Center for Information Security and Cryptography,
University of Calgary,
2500 University Drive NW,
Calgary, AB T2N 1N4, Canada}
\email{\href{mailto:vdimitro@ucalgary.ca}{vdimitro@ucalgary.ca}}

\address[Dimitrov]{
Lemurian Labs, Inc. 
}
\email{\href{mailto:vassil@lemurianlabs.com}{ vassil@lemurianlabs.com}}

\author[Howe]{Everett W. Howe}
\address[Howe]{Independent mathematician, 
         San Diego, CA 92104, USA}
\email{\href{mailto:however@alumni.caltech.edu}{however@alumni.caltech.edu}}
\urladdr{\href{http://ewhowe.com}{http://ewhowe.com}}

\date{May 12, 2023}

\keywords{Exponential Diophantine equation, binary digit}

\subjclass[2020]{Primary 11D61; Secondary 11A63, 11D72, 11D79}

\begin{document}
\begin{abstract}
Using completely elementary methods, we find all powers of $3$ that can be 
written as the sum of at most twenty-two distinct powers of~$2$, as well as all 
powers of $2$ that can be written as the sum of at most twenty-five distinct
powers of~$3$. The latter result is connected to a conjecture of Erd\H{o}s,
namely, that $1$, $4$, and $256$ are the only powers of $2$ that can be written
as a sum of distinct powers of~$3$.

We present this work partly as a reminder that for certain exponential 
Diophantine equations, elementary techniques based on congruences can yield 
results that would be difficult or impossible to obtain with more advanced
techniques involving, for example, linear forms in logarithms.
\end{abstract}

\maketitle


\section{Introduction}
\label{S:intro}

To introduce our topic, we begin with some numerical observations. For an 
integer $x\ge 0$, consider the binary representation of $3^x$. In 
Table~\ref{T:binary} we give this representation for $x \le 25$, and we tabulate
the number of bits in the binary representation together with the number of 
those bits that are equal to~$1$.

\begin{table}
\label{T:binary}
\caption{For each $x$ between $0$ and $25$ we give the binary representation 
         of~$3^x$, together with the total number of bits in the representation
         and the number of those bits that are equal to~$1$.}
\begin{tabular}{r r c c c}
\toprule
$x$  &             Binary representation of $3^x$ &&  \#Bits  & \#Ones  \\
\midrule
 $0$ &                                        $1$ && $\pz 1$  & $\pz 1$ \\
 $1$ &                                       $11$ && $\pz 2$  & $\pz 2$ \\
 $2$ &                                     $1001$ && $\pz 4$  & $\pz 2$ \\
 $3$ &                                    $11011$ && $\pz 5$  & $\pz 4$ \\
 $4$ &                                  $1010001$ && $\pz 7$  & $\pz 3$ \\
 $5$ &                                 $11110011$ && $\pz 8$  & $\pz 6$ \\
 $6$ &                               $1011011001$ &&    $10$  & $\pz 6$ \\
 $7$ &                             $100010001011$ &&    $12$  & $\pz 5$ \\
 $8$ &                            $1100110100001$ &&    $13$  & $\pz 6$ \\
 $9$ &                          $100110011100011$ &&    $15$  & $\pz 8$ \\
$10$ &                         $1110011010101001$ &&    $16$  & $\pz 9$ \\
$11$ &                       $101011001111111011$ &&    $18$  &    $13$ \\
$12$ &                     $10000001101111110001$ &&    $20$  &    $10$ \\
$13$ &                    $110000101001111010011$ &&    $21$  &    $11$ \\
$14$ &                  $10010001111101101111001$ &&    $23$  &    $14$ \\
$15$ &                 $110110101111001001101011$ &&    $24$  &    $15$ \\
$16$ &               $10100100001101011101000001$ &&    $26$  &    $11$ \\
$17$ &              $111101100101000010111000011$ &&    $27$  &    $14$ \\
$18$ &            $10111000101111001000101001001$ &&    $29$  &    $14$ \\
$19$ &          $1000101010001101011001111011011$ &&    $31$  &    $17$ \\
$20$ &         $11001111110101000001101110010001$ &&    $32$  &    $17$ \\
$21$ &       $1001101111011111000101001010110011$ &&    $34$  &    $20$ \\
$22$ &      $11101001110011101001111100000011001$ &&    $35$  &    $19$ \\
$23$ &    $1010111101011010111101110100001001011$ &&    $37$  &    $22$ \\
$24$ &  $100000111000010000111001011100011100001$ &&    $39$  &    $16$ \\
$25$ & $1100010101000110010101100010101010100011$ &&    $40$  &    $18$ \\
\bottomrule
\end{tabular}
\end{table}

Based on this limited data, it looks like about half of the bits of the binary
representation of $3^x$ are equal to $1$, which is what one would expect if 
$3^x$ were to behave like a random integer of the appropriate size. Computations
with larger values of $x$ seem to indicate that the fraction of $1$s does tend 
toward $1/2$ as $x$ increases to infinity, but proving that this is the case 
seems far beyond the reach of existing techniques.

A much weaker observation is that as $x$ goes to infinity, the number of $1$s
in the binary representation of $3^x$ tends to infinity as well; that is, one 
would certainly be tempted to guess that there are only finitely many $x$ such 
that the binary representation of $3^x$ contains fewer than ten $1$s, or a 
hundred $1$s, or any given finite number of $1$s. This observation \emph{is} in
fact true, and was proven by Senge and Straus in 1973; their 
result~\cite[Theorem~3, p.~100]{SengeStraus1973} implies that for any given~$n$, 
there are only finitely many $x$ such that the binary representation of $3^x$
has $n$ or fewer bits equal to~$1$. In 1980 Cameron Stewart proved an effective
version of this result~\cite[Theorem~1, p.~64]{Stewart1980} --- which means that
given a value of~$n$, Stewart's arguments produce a bound $B(n)$ so that if 
$x>B(n)$, then $3^x$ has more than $n$ bits equal to~$1$. Unfortunately, the
values of $B(n)$ produced by Stewart's method grow very quickly; for example, 
we can show\footnote{
     Stewart's Theorem~1 shows that the largest $x$ for which $3^x$ has at 
     most $22$ bits equal to $1$ satisfies 
     $23 > (\log\log 3^x)/(C + \log\log\log 3^x)$ for some positive
     constant~$C$. We only get a stronger upper bound on $x$ if we solve
     for $x$ when $C=0$, and this is how we get our lower bound for $B(22)$.}
that $B(22) > 4.9\!\times\!10^{46}$.

In this paper, we use completely elementary techniques to find all powers of $3$
whose binary representations have at most twenty-two bits equal to~$1$. In fact,
these powers of $3$ are exactly the ones displayed in Table~\ref{T:binary}.

\begin{theorem}
\label{T:POWERSOF2}
The only powers of \,$3$ that can be written as the sum of twenty-two or fewer 
distinct powers of \,$2$ are $3^x$, where $0\le x \le 25$.
\end{theorem}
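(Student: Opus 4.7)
The plan is to assume $3^x = 2^{a_1} + \cdots + 2^{a_n}$ with $a_1 < \cdots < a_n$ and $n \le 22$, and to derive constraints on $x$ via reductions modulo carefully chosen integers. Since $3^x$ is odd, one has $a_1 = 0$ immediately. For any auxiliary modulus $M$, let $d = \order_M(2)$ and $e = \order_M(3)$. Reducing the equation modulo $M$ and grouping the $a_i$'s according to their residues modulo $d$, one obtains
\[
3^x \equiv \sum_{r=0}^{d-1} c_r 2^r \pmod M,
\]
where $c_r$ is the number of indices $i$ with $a_i \equiv r \pmod d$, so that each $c_r \ge 0$ and $c_0 + \cdots + c_{d-1} = n \le 22$. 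The left side depends only on $x$ modulo $e$, and the right side ranges over a finite, enumerable set of residues modulo~$M$. Thus, for each residue class of $x$ modulo~$e$, one can decide whether the equation can possibly hold modulo $M$, and eliminate the class if not.

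The strategy is to build up a sequence of moduli $M_1, M_2, \ldots$ chosen so that each $d_i = \order_{M_i}(2)$ remains small enough to enumerate the right-hand sides, while $\mathrm{lcm}(e_1, e_2, \ldots)$ grows. Eliminating residue classes at each stage narrows down the admissible $x$ modulo larger and larger integers. With a sufficiently rich supply of moduli, one hopes the only surviving residue classes for $x$ are the twenty-six classes that correspond to the solutions $x = 0, 1, \ldots, 25$ displayed in Table~\ref{T:binary}. Supplementary information about the low-order bits — obtained from $v_2(3^x - 1),\ v_2(3^x - 1 - 2^{a_2}),\ldots$ via the lifting-the-exponent lemma — can be used either to prune further or to cross-check the residues that remain.

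The main obstacle is twofold. First, one must choose moduli whose orders combine to yield strong constraints, while keeping enumeration of the right-hand-side multisets tractable: the number of nonnegative tuples $(c_0, \ldots, c_{d-1})$ with $\sum c_r \le 22$ is $\binom{d+22}{22}$, which grows quickly in $d$, so one cannot simply take $M$ very large. Second, even after narrowing $x$ to a small set of residue classes modulo some $L$, one still has to rule out \emph{all} integers $x > 25$ in those classes; this forces the modular constraints to accumulate until $L$ is large enough that no $x > 25$ survives, or to be supplemented by a size argument using the top bits $2^{a_n}, 2^{a_{n-1}}, \ldots$ and the tight control $a_n = \lfloor x \log_2 3 \rfloor$. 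The delicate engineering consists in selecting a minimal, computationally feasible family $\{M_i\}$ that is simultaneously discriminating enough to collapse the possibilities down to the twenty-six known $x$ and nothing more.
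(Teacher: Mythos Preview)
Your plan has a structural gap that no amount of clever modulus selection will close. You propose to track only the residue class of $x$ modulo $e = \order_M(3)$, eliminating classes for which no admissible multiset $(c_0,\ldots,c_{d-1})$ exists. But for every modulus $M$ and every $x_0\in\{0,\ldots,25\}$, the class of $x_0$ modulo $e$ is automatically admissible, since $3^{x_0}$ genuinely has at most twenty-two ones. Consequently, after combining all your congruence information you will be left with (at least) the twenty-six classes $x\equiv 0,\ldots,25\pmod L$, and each of these contains infinitely many integers larger than $25$. Congruences on $x$ alone can never separate $x=23$ from $x=23+L$; your hope that ``$L$ is large enough that no $x>25$ survives'' is therefore unreachable in principle, and the gestured ``size argument using the top bits'' is not a proof.

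The missing idea, which the paper supplies, is to work modulo integers $M$ that are divisible by \emph{high powers of~$2$}. This forces you to track the full tuple $(x,a_1,\ldots,a_n)$ rather than only $x$, but in exchange it introduces \emph{determinate} powers of $2$: if $2^{i+1}\mid M$ then the only nonnegative $b$ with $2^b\equiv 2^i$ in $\BZ/M\BZ$ is $b=i$. Once every $a_j$ appearing in a surviving congruence solution is determinate, the right-hand side $\sum 2^{a_j}$ is a specific integer, and one simply checks whether it equals a power of~$3$. That is the termination mechanism. The paper's Lemma~\ref{L:newsolution} moreover shows that without meeting a precise divisibility condition on $O'_3(M)$, extraneous indeterminate solutions \emph{necessarily} persist---so the choice of moduli is constrained in a way your outline does not anticipate. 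Your reduction $\sum 2^{a_i}\equiv\sum c_r 2^r$ implicitly assumes $M$ odd and thereby discards exactly this tail information.
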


In other words, there are more than twenty-two $1$s in the binary representation
of $3^x$ exactly when $x > 25$. Clearly, this bound is much smaller than the one
obtained from Stewart's theorem!

We also look at the complementary problem of finding powers of $2$ whose 
base-$3$ representations contain no $2$s and at most twenty-five $1$s. Stewart's
theorem applies here as well, and says that if $2^x$ can be expressed in this 
manner, then $x$ is less than a computable bound that is larger than 
$5.4\!\times\!10^{54}$. Our result shows that in fact $x\le 8$.

\begin{theorem}
\label{T:POWERSOF3}
The only powers of \,$2$ that can be written as the sum of twenty-five or fewer
distinct powers of \,$3$ are\textup{:}
\begin{align*}
  2^0 &= 3^0\\
  2^2 &= 3^0 + 3^1\\
  2^8 &= 3^0 + 3^1 + 3^2 + 3^5.
\end{align*}
\end{theorem}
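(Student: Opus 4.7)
I would prove Theorem~\ref{T:POWERSOF3} by reducing it to a finite case analysis driven by congruences modulo powers of $2$ and $3$. The first step is the elementary observation that reducing $2^x = \sum_{i=1}^n 3^{e_i}$ (with $0 \le e_1 < \cdots < e_n$ and $n \le 25$) modulo $3$ forces $e_1 = 0$, lest $3 \mid 2^x$. This also forces $x$ to be even. For the smallest values of $n$ (say $n \le 4$), a direct case analysis using moduli such as $9$, $27$, $4$, and $8$ recovers the three solutions listed in the theorem.

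For general $n$, I would set up a tree search that determines the exponents $e_i$ one at a time, in increasing order. At stage $K$, a \emph{partial candidate} records (i) the set $S_K$ of those $e_i$ that lie in the range $[0,K)$, and (ii) the residue of $x$ modulo $\mathrm{ord}_{3^K}(2) = 2 \cdot 3^{K-1}$, uniquely determined by $\sum_{e \in S_K} 3^e \pmod{3^K}$ since $2$ is a primitive root modulo every power of $3$. Advancing from stage $K$ to stage $K+1$ splits each candidate into two sub-candidates (``$K$ is, or is not, among the $e_i$''), subject to two pruning tests: a mod-$3^{K+1}$ test that refines $x$ to a residue modulo $2 \cdot 3^K$ consistent with the previous class (frequently failing outright), and a mod-$2^{K+t}$ test that rejects candidates for which no assignment of the remaining $n - |S_K|$ exponents (each $\ge K$) can make $\sum_{i=1}^n 3^{e_i} \equiv 0 \pmod{2^{K+t}}$.

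Because the total number of ``yes'' decisions along any surviving branch is at most $n \le 25$, the tree is a priori of bounded width. Moreover, as $K$ grows, the mod-$2^{K+t}$ test becomes increasingly restrictive: the residues $3^{e} \pmod{2^{K+t}}$ can assume only $2^{K+t-2}$ distinct values, so requiring the total sum to vanish modulo $2^{K+t}$ admits only a few continuations for the remaining exponents. Once $K$ exceeds the largest exponent $e_n$ that can arise in a surviving branch, that branch either matches a completed solution or is killed.

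The \emph{main obstacle} is to make this pruning effective enough for the search to terminate in tractable time. In principle the tree could remain wide for many stages; carefully choosing $t$ at each stage, and possibly supplementing with reductions modulo well-chosen small primes where $2$ and $3$ have short orders, is the key technical craft. Explicitly certifying that no partial candidate survives beyond some explicit bound --- uniformly across all $n$ up to $25$ --- is the central bookkeeping challenge of the proof, and almost certainly requires computer verification. This is precisely the elementary, $\BZ/M\BZ$-based work the authors highlight in the introduction.
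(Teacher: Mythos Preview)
Your plan takes a genuinely different route from the paper's. Both are computational, but the organizing principle differs.

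The paper does \emph{not} do a digit-by-digit tree search in base~$3$. Instead it fixes a short sequence of composite moduli $M_1 \mid M_2 \mid \cdots \mid M_8$ (built from $2$, $3$, and the primes $5, 7, 13, 19, 37, 41, 73, 757, 6481, 530713, 282429005041$; see Table~\ref{T:3}), computes all solutions of $2^x \equiv 3^{a_1} + \cdots + 3^{a_n}$ in $\BZ/M_1\BZ$ by brute force, and then \emph{lifts} this finite solution set from $\BZ/M_{i-1}\BZ$ to $\BZ/M_i\BZ$ using either a meet-in-the-middle split of the summands (the ``balanced'' case of Section~\ref{S:2}) or a discrete-logarithm computation in $(\BZ/m_i\BZ)^*$ (the ``unbalanced'' case). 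The $m_i$ are chosen so that $O_3(M_i)$ grows very slowly --- roughly doubling at each step --- which keeps each lift cheap. The procedure halts once every power of $3$ appearing in a surviving solution is \emph{determinate} (on the tail, not the loop, of the powers-of-$3$ diagram in $\BZ/M_i\BZ$), after which each residual solution lifts to at most one candidate in~$\BZ$. The paper also disposes of all odd $n>1$ at once by parity modulo~$2$, which your plan omits.

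The point your plan underplays is \emph{why auxiliary primes beyond $2$ and $3$ are essential}. Lemma~\ref{L:newsolution} shows that working modulo $M = 2^u 3^v M'$ cannot pin down the large summand unless $O_2'(M)$ carries enough powers of~$3$; for $M' = 1$ one has $O_2'(M) = 1$, so this never happens. Translated to your tree: the mod-$3^{K+1}$ refinement always admits exactly two of the three continuations (digit $0$ or digit~$1$), so absent external input the tree has branching factor~$2$, and after imposing $|S_K|\le n$ one expects on the order of $\binom{K}{n-1}$ live nodes at depth~$K$ --- polynomial growth, not termination. Your mod-$2^{K+t}$ pruning does not obviously beat this, because a single remaining free summand $3^{e}$ with $e \ge K$ already ranges over the full cyclic subgroup of order $2^{K+t-2}$ in $(\BZ/2^{K+t}\BZ)^*$ and can hit any prescribed target residue. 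So the ``main obstacle'' you flag is real, and the remedy you gesture at --- ``supplementing with reductions modulo well-chosen small primes'' --- is exactly the substance of the paper's proof. What the paper's approach buys is a principled selection of those primes (governed by the order conditions of Section~\ref{S:extra}) together with lifting machinery that keeps the solution count uniformly bounded at every stage, rather than hoping a depth-first search happens to prune fast enough.
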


Put differently, if $x\not\in\{0, 2, 8\}$ then the base-$3$ representation of
$2^x$ will contain either at least one~$2$, or at least twenty-six $1$s. This 
provides a tiny bit of confirmation for a conjecture of  
Erd\H{o}s~\cite[Problem~1, p.~67]{Erdos1979}, which states that the only powers 
of $2$ whose base-$3$ representations contain only $0$s and $1$s are the three
examples given in Theorem~\ref{T:POWERSOF3}. (For work on Erd\H{o}s's conjecture
and closely related problems, see for example~\cite{
BennettBugeaudEtAl2013,
DupuyWeirich2016,
Lagarias2009,
Narkiewicz1980}
and the papers these articles cite.)

Theorems~\ref{T:POWERSOF2} and~\ref{T:POWERSOF3} can be expressed in terms of
exponential Diophantine equations. In particular, Theorem~\ref{T:POWERSOF2} 
gives us all solutions of 
\begin{equation}
\label{EQ:sumof2s}
3^x = 2^{a_1} + \cdots + 2^{a_n}, \qquad x\ge 0, \quad 0 \le a_1 < \cdots < a_n 
\end{equation}
for $n\le 22$, and Theorem~\ref{T:POWERSOF3} gives us all solutions to 
\begin{equation}
\label{EQ:sumof3s}
2^x = 3^{a_1} + \cdots + 3^{a_n}, \qquad x\ge 0, \quad 0 \le a_1 < \cdots < a_n 
\end{equation}
for $n\le 25$.

Our method for solving equations~\eqref{EQ:sumof2s} and~\eqref{EQ:sumof3s}
involves considering the equations modulo~$M$ for a sequence of well-chosen
moduli~$M$, each one dividing the next. We will postpone our discussion of what 
``well-chosen'' means, and for now we will simply illustrate our method with an 
example.

Let us look at the case $n=3$ of equation~\eqref{EQ:sumof2s}. We start by 
considering the related problem of writing a power of $3$ as the sum of three
powers of $2$ in the finite ring $\ZZ/M_1\ZZ$ for 
$M_1 = 5440 = 2^6 \cdot 5 \cdot 17$, where we no longer insist that the powers 
of~$2$ be distinct. The following diagram enumerates the powers of $2$ 
modulo~$M_1$; here the arrows indicate multiplication by~$2$.
\begin{equation}
\label{EQ:2diagram}
\xygraph{
!{<0ex,0ex>; <15ex,0ex>: <0ex,10ex>::}
!{(-3.5,1)}*+{ 1}="1"
!{(-3  ,1)}*+{ 2}="2"
!{(-2.5,1)}*+{ 4}="4"
!{(-2  ,1)}*+{ 8}="8"
!{(-1.5,1)}*+{16}="16"
!{(-1  ,1)}*+{32}="32"
!{( 0  ,1)}*+{64}="N"
!{(-0.3         , 1           )}*+{}="LN"
!{( \halfroottwo, \halfroottwo)}*+{128}="NE"
!{( 1           , 0           )}*+{256}="E"
!{( \halfroottwo,-\halfroottwo)}*+{512}="SE"
!{( 0           ,-1           )}*+{1024}="S"
!{(-\halfroottwo,-\halfroottwo)}*+{2048}="SW"
!{(-1           , 0           )}*+{4096}="W"
!{(-\halfroottwo, \halfroottwo)}*+{2752}="NW"
"1"-@{>}"2"
"2"-@{>}"4"
"4"-@{>}"8"
"8"-@{>}"16"
"16"-@{>}"32"
"32"-@{>}"N"
"N"-@/^/@{>}"NE"
"NE"-@/^/@{>}"E"
"E"-@/^/@{>}"SE"
"SE"-@/^/@{>}"S"
"S"-@/^/@{>}"SW"
"SW"-@/^/@{>}"W"
"W"-@/^/@{>}"NW"
"NW"-@<-0.27ex>@/^/@{>}"N"
}
\end{equation}
We see there are $14$ distinct powers of~$2$ modulo~$M_1$, and likewise we find 
that there are $16$ distinct powers of~$3$. Using a computer to enumerate sums
of three powers of $2$ in $\ZZ/M_1\ZZ$, we find that (up to the order of the 
summands) there are only three ways to write a power of $3$ in $\ZZ/M_1\ZZ$ as a
sum of three powers of~$2$:
\begin{align}
\label{EQ:s1} 3^1 &\equiv  2^0 + 2^0 + 2^0 \bmod M_1\\
\label{EQ:s2} 3^2 &\equiv  2^0 + 2^2 + 2^2 \bmod M_1\\
\label{EQ:s3} 3^4 &\equiv  2^0 + 2^4 + 2^6 \bmod M_1.
\end{align}

For each of the summands $2^i$ on the right-hand side of one of these equations,
we can ask for the exponents $b$ such that $2^b \equiv 2^i \bmod M_1$. Looking
at diagram~\eqref{EQ:2diagram}, we see that for $i = 0, 2$, and~$4$, the only
exponent $b$ with $2^b \equiv 2^i\bmod M_1$ is $i$ itself, because $1$, $4$, and
$16$ are all on the ``tail'' of the diagram. On the other hand, the exponents
$b$ with $2^b \equiv 2^6\bmod M_1$ are 
$\{6, 14, 22, 30, \ldots \} = {\{6 + 8 j : j \ge 0\}}$, because the ``loop''
part of diagram~\eqref{EQ:2diagram} goes around in a cycle of $8$ steps.

Every solution to equation~\eqref{EQ:sumof2s} with $n=3$ must reduce 
modulo~$M_1$ to one of the three equations~\eqref{EQ:s1}, \eqref{EQ:s2}, 
or~\eqref{EQ:s3}. However, no solution to equation~\eqref{EQ:sumof2s} can reduce
to~\eqref{EQ:s1}, because the summands in~\eqref{EQ:sumof2s} would have to be
$2^0$, $2^0$, and $2^0$, which are not distinct. Likewise, no solution to 
equation~\eqref{EQ:sumof2s} can reduce modulo $M_1$ to~\eqref{EQ:s2}, because
two of the summands in~\eqref{EQ:sumof2s} would have to be~$2^2$. Therefore, 
every solution to equation~\eqref{EQ:sumof2s} with $n=3$ reduces modulo $M_1$
to~\eqref{EQ:s3}, and we see that two of the summands in~\eqref{EQ:sumof2s} must
be $2^0$ and~$2^4$.

Now we consider information modulo $M_2 = 2^7 \cdot 5 \cdot 17 \cdot 257$. If a
solution to equation~\eqref{EQ:sumof2s} reduces modulo $M_1$ to~\eqref{EQ:s3},
what can it reduce to modulo $M_2$? There are $16$ powers of $3$ in $\ZZ/M_2\ZZ$
that reduce to $3^4$ in $\ZZ/M_1\ZZ$, namely 
$3^4, 3^{4+16}, \ldots, 3^{4 + 15\cdot16}$, and there are $3$ powers of $2$ in 
$\ZZ/M_2\ZZ$ that reduce to $2^6$ in $\ZZ/M_1\ZZ$, namely $2^6$, $2^{14}$, 
and~$2^{22}$. We check that in $\ZZ/M_2\ZZ$ neither $2^0 + 2^4 + 2^{14}$ nor 
$2^0 + 2^4 + 2^{22}$ is equal to any of the possible powers of~$3$. However, 
$3^4 \equiv 2^0 + 2^4 + 2^6$ in $\ZZ/M_2\ZZ$.

Therefore, every solution to equation~\eqref{EQ:sumof2s} with $n=3$ must reduce 
modulo $M_2$ to the congruence $3^4 \equiv 2^0 + 2^4 + 2^6 \bmod M_2$. But we 
check that $2^0$, $2^4$, and $2^6$ lie on the tail of the analog of
diagram~\eqref{EQ:2diagram} for $M_2$, so the only powers of $2$ in the integers
that reduce to $2^0$, $2^4$, and $2^6$ modulo~$M_2$ are $2^0$, $2^4$, and $2^6$
themselves. We see that if there is a solution to equation~\eqref{EQ:sumof2s}
with $n=3$, the right-hand side must be $2^0 + 2^4 + 2^6$. As it happens, in 
the integers this sum is equal to $3^4$, so $3^4 = 2^0 + 2^4 + 2^6$ is the 
unique solution to equation~\eqref{EQ:sumof2s} with $n=3$.

This simple example displays the basic idea that we use to prove 
Theorem~\ref{T:POWERSOF2}. For such a small example we could have \emph{started}
by considering the equation modulo~$M_2$, instead of first looking modulo~$M_1$,
but for larger examples it is much more efficient to cut down the solution space
by looking first at small moduli before building up to larger ones.

Solving exponential Diophantine equations using congruence arguments is not a
new technique. In 1976, for example, Alex~\cite{Alex1976} used congruences to
find all solutions to $x + y = z$, where $x$, $y$, and $z$ are mutually coprime
integers divisible by no prime larger than~$7$. In 1982, Brenner and
Foster~\cite{BrennerFoster1982} presented a whole bestiary of exponential
Diophantine equations that can be solved in this way. (They mention in 
particular that Alex found all solutions to our example 
$3^x = 2^{a_1} + 2^{a_2} + 2^{a_3}$ using ``a few small moduli,'' although this
had been solved earlier by Pillai, as we discuss below.) In 2009, \'Ad\'am, 
Hajdu, and Luca~\cite{AdamHajduLuca2009} used a result of Erd\H{o}s, Pomerance,
and Schmutz~\cite{ErdosPomeranceSchmutz1991} to show that for every finite set
$S$ of primes and finite set $A\subset\ZZ$ of coefficients, the number of 
integers less than $x$ that can be written as the sum of a fixed number of terms
of the form $a s$, where $a\in A$ and $s\in\ZZ$ is a product of powers of primes
in~$S$, grows more slowly than a specific power of $\log x$. Independently, in a
2011 paper~\cite{DimitrovHowe2011} we studied representations of integers as 
sums of terms of the form $\pm 2^a 3^b$, which is the case $A = \{\pm 1\}$, 
$S = \{2,3\}$ of the problem studied in~\cite{AdamHajduLuca2009}. We presented
one way of finding moduli $M$ that could be used to prove that certain integers
cannot be represented by a given number of such terms, and we used the same
result of Erd\H{o}s, Pomerance, and Schmutz to show that there is a positive 
constant $c$ such that infinitely many integers $n$ cannot be written as a sum of
fewer than $c \log n / (\log\log n \log\log\log n)$ such terms.

In 2016 Bert\'ok and Hajdu~\cite{BertokHajdu2016} studied exponential
Diophantine equations in general, again using arguments based 
on~\cite{ErdosPomeranceSchmutz1991}, and they conjectured that if an exponential
Diophantine equation has only a finite number of solutions\footnote{
     The statement of the conjecture~\cite[p.~849]{BertokHajdu2016} only
     applies to Diophantine equations with \emph{no} solutions, but later in 
     the paper the authors show how the conjecture, if true, can be applied to
     equations that have finitely many solutions.}
and satisfies some other natural restrictions, then there is an integer $M$ such
that the solutions to the equation modulo~$M$ lift uniquely to the solutions 
in~$\ZZ$. In a later paper~\cite{BertokHajdu2018} the same authors generalized
this conjecture to number fields. One can view our work in this paper as
providing evidence in support of the Bert\'ok--Hajdu conjectures.

Our main contribution in this paper is the method we describe for choosing a 
sequence of moduli that allows us to refine the collection of solutions 
modulo~$M$, for larger and larger~$M$, until every solution modulo $M$ can be
lifted to at most one solution in the integers. Our moduli are chosen in a 
careful order that makes each refinement step computationally feasible. The 
closest predecessor to our technique seems to be the method used by Bert\'ok and
Hajdu in~\cite{BertokHajdu2016}, in which they choose a modulus $M$ and then 
piece together information gleaned from solutions to the original Diophantine
equation modulo the prime power divisors of $M$. Another new observation in this
paper appears in Section~\ref{S:extra}, where we show that any modulus $M$ that 
provides us with all solutions to equation~\eqref{EQ:sumof2s}
or~\eqref{EQ:sumof3s} must satisfy an unexpected condition.

We study the problem of writing powers of $2$ as sums of distinct powers of~$3$,
as well as the complementary problem of writing powers of $3$ as sums of 
distinct powers of~$2$, for several reasons. First, these problems are
simply-stated and natural. Second, we wanted to see what we could say about
Erd\H{o}s's conjecture. Third, we were curious how far the modular methods 
discussed by Brenner and Foster can be pushed, since even modest laptop 
computers are much more powerful than anything available at the time their paper
was written. And finally, we hope to bring these straightforward modular 
techniques to the attention of the community of mathematicians who are 
interested in exponential Diophantine equations.

As a historical note, we observe that the solutions to the case $n=2$ of
equations~\eqref{EQ:sumof2s} and~\eqref{EQ:sumof3s} were determined nearly
seven centuries ago by Levi ben Gerson~\cite{benGerson1342}, who showed that 
the only pairs of integers of the form $2^r 3^s$ that differ by $1$ are $(1,2)$,
$(2,3)$, $(3,4)$, and $(8,9)$. A paraphrase of ben Gerson's argument, more 
legible\footnote{
  The adjective is chosen with intention. Follow the link in the bibliography
  to understand why.}
than~\cite{benGerson1342}, is given 
in~\cite[Appendice, pp.~183--191]{ChemlaPahaut1992}. One way to prove ben 
Gerson's theorem is to observe that every solution to ben Gerson's problem is a
solution to the case $n=2$ of either equation~\eqref{EQ:sumof2s} or 
equation~\eqref{EQ:sumof3s}, and then to consider those two equations 
modulo~$80$.

In 1945, Pillai~\cite{Pillai1945} found all solutions to 
$\pm(2^x - 3^y) = 2^X + 3^Y$; taking either $x$ or $y$ to be $0$ leads to the 
solutions for the case $n=3$ of equations~\eqref{EQ:sumof2s} 
and~\eqref{EQ:sumof3s}. Between 2011 and 2013, Bennett, Bugeaud, and 
Mignotte~\cite{BennettBugeaudEtAl2013,BennettBugeaudEtAl2012} used linear forms
in two logarithms to find all perfect powers whose binary representations have 
at most four bits equal to $1$ (extending a result of Szalay~\cite{Szalay2002}
that gives all perfect squares with at most three bits equal to~$1$), and this 
solves the case $n=4$ of equation~\eqref{EQ:sumof2s}. These are all of the 
previous solutions to cases of equations~\eqref{EQ:sumof2s} 
and~\eqref{EQ:sumof3s} that we are aware of; however, the paper of Bert\'ok and 
Hajdu~\cite{BertokHajdu2016} discussed earlier includes solutions to many very 
similar equations, including, for example, finding all powers of $17$ that can
be expressed as the sum of nine distinct powers of~$5$. Surely their methods 
could have been used to solve some more instances of 
equations~\eqref{EQ:sumof2s} and~\eqref{EQ:sumof3s}.

The structure of this paper is as follows: In Section~\ref{S:prelim} we briefly
review some notation. In Section~\ref{S:extra} we observe that in some 
situations there will necessarily be solutions to equations~\eqref{EQ:sumof2s}
or~\eqref{EQ:sumof3s} modulo~$M$ that are not reductions of solutions in the
integers, unless some specific conditions on $M$ hold. These conditions shape 
our strategy of choosing a specific sequence of moduli to use in the proofs of 
Theorems~\ref{T:POWERSOF2} and~\ref{T:POWERSOF3}. In Section~\ref{S:lifting} we
give examples of two different ways of lifting solutions to~\eqref{EQ:sumof2s}
modulo $M_1$ to solutions modulo~$M_2$, suitable for two different
circumstances. These examples help clarify the process by which we proved
Theorems~\ref{T:POWERSOF2} and~\ref{T:POWERSOF3}. We present the proofs of these
theorem in Sections~\ref{S:2} and~\ref{S:3}.

The programs we used to complete our calculations were written in 
Magma~\cite{BosmaCannonEtAl1997} and are available as supplementary material
attached to the ArXiv version of this paper~\cite{DimitrovHowe2021}. They are
also available on the second author's web site.

\subsection*{Acknowledgments}
We are grateful to Lajos Hajdu for his comments on an earlier version of this
paper, and to the anonymous referees for their helpful suggestions.

\section{Notation and conventions}
\label{S:prelim}

In this paper we will often want to count or enumerate the number of solutions
to an exponential Diophantine equation modulo~$M$, but there is some natural 
ambiguity as to what this might mean. For instance, there are infinitely many 
pairs of integers $x\ge 0$ and $y\ge 0$ for which the congruence 
$3^x \equiv 2^y + 5\bmod 28$ holds, but for every such $x$ and $y$ we have
$3^x \equiv 9\bmod 28$ and $2^y\equiv 1\bmod 28$, so it might not be
unreasonable to say that there is only one solution. In order to avoid any
confusion, we remove this ambiguity by adopting the following convention.

\begin{convention}
\label{convention}
When we count or enumerate solutions to an exponential Diophantine equation
modulo~$M$, we will consider two solutions to be the same if the corresponding
terms in the equation are congruent modulo $M$.
\end{convention}

This means, for example, that for the congruence $3^x \equiv 2^y + 5\bmod 28$ we
consider the solutions $(x,y) = (2,2)$, $(x,y) = (8,2)$, and $(x,y) = (8,5)$ to 
be the same, because in each case $3^x\equiv 9 \bmod 28$ and 
$2^y\equiv 4\bmod 28$.

This convention does have one drawback, which is that for some exponential
Diophantine equation modulo~$M$, there truly are only finitely many integer 
solutions. For example, the only integers $x\ge 0$ and $y\ge 0$ such that
$3^x \equiv 2^y + 5\bmod 216$ are $x=2$ and $y=2$. This distinction will in fact
be important to us, so we make the following definition.

\begin{definition}
Let $M>0$ be an integer and $p$ a prime. We say that a power of~$p$, say $p^i$, 
is \emph{determinate} modulo $M$ if the only integer $b\ge 0$ with 
$p^b \equiv p^i\bmod M$  is $b=i$\textup{;}  otherwise, we say that $p^i$ is an
\emph{indeterminate} power of~$p$ modulo~$M$.
\end{definition}

Thus, we will say that the congruence $3^x\equiv 2^y + 5\bmod 28$ has one 
solution, namely $3^2\equiv 2^2 + 5\bmod 28$, but that $3^2$ is an indeterminate
power of $3$ modulo $28$ and $2^2$ is an indeterminate power of $2$ modulo~$28$.
On the other hand, ${3^x\equiv 2^y + 5\bmod 216}$ also has only one solution, but 
the power of $3$ and the power of $2$ involved are both determinate.

Given a prime $p$ and an integer $M>0$, we can construct a diagram like
diagram~\eqref{EQ:2diagram} of the powers of $p$ modulo~$M$. Note that a 
determinate power of $p$ modulo~$M$ is exactly a power of $p$ that lies on the 
tail of this diagram, and a straightforward argument shows that for $i\ge 0$, the
integer $p^i$ is a determinate power of $p$ modulo~$M$ if and only if $M$ is
divisible by~$p^{i+1}$. 

Recall that if $M$ is a positive integer then the group of units in the ring 
$\ZZ/M\ZZ$ has order $\varphi(M)$, where $\varphi$ is the Euler 
$\varphi$-function, which can be computed using the formula 
$\varphi(n) = n \prod_{p\mid n} (1 - 1/p)$; 
see~\cite[\S2.3, \S2.5]{Apostol1976}. Also, if $M$ is an odd prime power then
the group of units in $\ZZ/M\ZZ$ is 
cyclic~\cite[Theorem~10.4, p.~207]{Apostol1976}.
            
For every prime $p$, we let $v_p$ be the \emph{$p$-adic valuation} function, so
that $v_p(M)$ is the largest $x$ such that $p^x$ divides $M$. And lastly, we set
some notation related to the behavior of the numbers $2$ and $3$ in finite 
rings. 

\begin{notation}
\label{N:23}
Let $M$ be a positive integer and write $M = 2^u 3^v M'$, where $u = v_2(M)$ and
$v = v_3(M)$, so that $M'$ is coprime to $6$.
\begin{itemize}
\item We let $O_2(M)$ be the multiplicative order of $2$ in the
      ring $\ZZ/3^v M'\ZZ$.
\item We let $O_2'(M)$ be the multiplicative order of $2$ in the
      ring $\ZZ/M'\ZZ$.
\item We let $O_3(M)$ be the multiplicative order of $3$ in the
      ring $\ZZ/2^u M'\ZZ$.
\item We let $O_3'(M)$ be the multiplicative order of $3$ in the
      ring $\ZZ/M'\ZZ$.
\end{itemize}
\end{notation}
We see, for example, that there are $v_2(M)+ O_2(M)$ elements in the
tail-and-loop diagram of the powers of $2$ modulo~$M$, with $v_2(M)$ in the
tail and $O_2(M)$ in the loop. Similarly, there are $v_3(M)+ O_3(M)$ elements
in the tail-and-loop diagram of the powers of $3$ modulo~$M$.

\section{Extraneous solutions to congruences}
\label{S:extra}

The basic heuristic behind our strategy for solving instances of 
equations~\eqref{EQ:sumof2s} and~\eqref{EQ:sumof3s} is that if $M$ is large and 
there are very few powers of $2$ in $\ZZ/M\ZZ$ and very few powers of $3$ in 
$\ZZ/M\ZZ$, then there should be very few ``extraneous'' solutions to 
equations~\eqref{EQ:sumof2s} or~\eqref{EQ:sumof3s} modulo $M$ --- that is,
solutions that are not the reduction modulo $M$ of a solution in the integers.
If $M$ is divisible by sufficiently high powers of $2$ and/or~$3$, we can hope
that every solution modulo $M$ to equation~\eqref{EQ:sumof2s}
or~\eqref{EQ:sumof3s} will involve only determinate powers of $2$ or of $3$ 
modulo~$M$ (where \emph{determinate} is as defined in Section~\ref{S:prelim}).
If this is the case, then each solution will lift uniquely to the integers, if
it lifts at all. However, it turns out that for many moduli~$M$, if there is
\emph{any} solution to one of these equations, then there is \emph{also} a 
solution that includes indeterminate powers of $2$ and of~$3$.

For example, we saw in the introduction that if 
$M_1 = 5440 = 2^6\cdot 5\cdot 17$ then the equation
$3^x \equiv 2^{a_1} + 2^{a_2} + 2^{a_3} \bmod M_1$ has the three solutions
given by~\eqref{EQ:s1}, \eqref{EQ:s2}, and \eqref{EQ:s3}, and we see that 
\eqref{EQ:s3} involves an indeterminate power of $2$ (and of~$3$). If we look at
the same equation modulo $M_2$, where $M_2 = 2 M_1 = 2^7\cdot 5\cdot 17$, then 
we find four solutions, including $3^{20} \equiv 2^0 + 2^4 + 2^{14}$, and this 
involves indeterminate powers of $2$ and of $3$ modulo~$M_2$. When we look at
the same equation modulo~$M_3$, where 
$M_3 = 41 M_2 = 2^7\cdot 5\cdot 17 \cdot 41$, there is once again a solution 
with indeterminate powers of $2$ and $3$, namely 
$3^{20} \equiv 2^0 + 2^4 + 2^{46}$. And the same happens yet again when we work
modulo~$M_4$, where $M_4 = 193 M_3 = 2^7\cdot 5\cdot 17 \cdot 41 \cdot 193$.

And yet in the introduction, when we considered solutions to 
$3^x \equiv 2^{a_1} + 2^{a_2} + 2^{a_3}$ modulo $2^7 \cdot 5 \cdot 17\cdot 257$,
we did \emph{not} wind up with extraneous solutions. What is the difference
between $2^7 \cdot 5 \cdot 17\cdot 257$ and 
$2^7\cdot 5\cdot 17 \cdot 41 \cdot 193$?

The following proposition, which uses Notation~\ref{N:23}, explains one way in
which solutions with indeterminate powers of $2$ or $3$ can arise, and suggests
a condition that we will want to impose on the moduli we use.

\begin{lemma}
\label{L:newsolution}
Let $M$ be a positive integer. Suppose $x>2$, $y>0$, and $c$ are integers such
that $3^y \equiv c + 2^x \bmod M$. If $O_3'(M)$ is not divisible by $2^{x-1}$
and $O_2'(M)$ is not divisible by~$3^{y}$, then there are integers $x'\ge 0$ and
$y'\ge 0$ such that 
\begin{enumerate}
\item[(a)] $3^{y'} \equiv c + 2^{x'}\bmod M$,
\item[(b)] $2^{x'}$ is an indeterminate power of $2$ modulo $M$, and
\item[(c)] $3^{y'}$ is an indeterminate power of $3$ modulo $M$.
\end{enumerate}
\end{lemma}

Lemma~\ref{L:newsolution} shows that in the example we presented in the 
introduction, it was necessary for us to use a modulus divisible by a prime 
(in our case,~$257$) for which either the order of $3$ is divisible by $2^5$
or the order of $2$ is divisible by~$3^4$. Since ${3^4 = 2^0 + 2^4 + 2^6}$, if
we use a modulus $M$ that is divisible by $2^7$ (so that $2^0$, $2^4$, and $2^6$
are determinate powers of $2$ modulo~$M$), Lemma~\ref{L:newsolution} shows 
that there will be other, extraneous, solutions modulo $M$ unless $M$ is 
divisible by such a prime.

\begin{proof}[Proof of Lemma~\textup{\ref{L:newsolution}}]
Write $M = 2^u 3^v M'$ where $M'$ is an integer coprime to $6$, and set
$o_2 = O_2'(M)$ and $o_3 = O_3'(M)$. First we claim that there is an integer $s$
such that $y + so_3 > v$ and $3^{y + s o_3} \equiv c \bmod 2^u$. 

Suppose $u\le x$, so that $3^y\equiv c\bmod 2^u$. We know that 
$3^s\equiv 1\bmod 2^u$ if $s$ is a multiple of $\varphi(2^u)$, so we can simply 
take $s$ to be a large enough multiple $\varphi(2^u)$ so that $y + so_3 > v$,
and this $s$ meets the conditions of our claim.

Suppose $u > x$. Then $M$ is even, and since $c$ differs from $3^y$ by a 
multiple of the even number~$M$, we see that $c$ must be odd. Therefore there is
an integer $d$ such that $cd\equiv 1 \bmod 2^u$. Choose such a $d$ and consider
the integer $z = 1 + 2^x d$, which is congruent to $1\bmod 8$ because $x>2$. If
we apply part~\ref{logbase3mod2} of Lemma~\ref{L:logs} (below) to this~$z$, we
find that there is an integer $e_0$, divisible by $2^{x-2}$, such that every
integer $e$ with $e\equiv e_0\bmod 2^{u-2}$ satisfies 
$3^e \equiv 1 + 2^x d \bmod 2^u$. By assumption, the highest power of $2$ that
divides $o_3$ is at most $2^{x-2}$. Therefore there is an integer $s$ such that
$s o_3\equiv - e_0\bmod 2^{u-2}$, and we can choose such an $s$ that is large
enough so that $y + so_3 > v$.

We have $3^{-s o_3} \equiv 1 + 2^x d \bmod 2^u$. Multiplying both sides of this
congruence by $c\,3^{s o_3}$ gives ${c \equiv (c + 2^x) 3^{s o_3} \bmod 2^u}$, and
since $c + 2^x \equiv 3^y \bmod M$ and hence also modulo $2^u$, we find that
$c \equiv 3^{y + s o_3}\bmod 2^u$. Thus, this $s$ has the properties we desire,
and we have proven our claim.

Similarly, using part~\ref{logbase2mod3} of Lemma~\ref{L:logs}, we can show that
there is an integer $r$ such that $x + r o_2 > u$ and 
$2^{x + r o_2} \equiv - c \bmod 3^v$.

Let $x' = x + r o_2$ and let $y' = y + s o_3$. We claim that this $x'$ and $y'$
satisfy conditions (a), (b), and (c) from the lemma. It is easy to check 
conditions (b) and (c) because $x' > u$ and $y' > v$ by construction. To check
condition~(a), we use the Chinese Remainder Theorem: It suffices to check that
$3^{y'} \equiv c + 2^{x'}$ modulo~$M'$, modulo~$2^u$, and modulo~$3^v$.

We have $2^{o_2} \equiv 1\bmod M'$ and $3^{o_3}\equiv 1\bmod M'$ by the 
definitions of $o_2$ and $o_3$, so $3^{y'}\equiv 3^y\bmod M'$ and 
$2^{x'}\equiv 2^x\bmod M'$, and we have $3^{y'} \equiv c + 2^{x'}\bmod M'$.

We have $2^{x'}\equiv 0\bmod 2^u$ because $x + r o_2 > u$ by construction.
Since $3^{y'}  \equiv 3^{y + s o_3} \equiv c\bmod 2^u$, we have
$3^{y'} \equiv c + 2^{x'}\bmod 2^u$.

The same reasoning shows that we have $3^{y'}\equiv 0\bmod 3^v$, and since 
$2^{x'}\equiv 2^{x + r o_2} \equiv -c\bmod 3^v$, we have
${3^{y'} \equiv c + 2^{x'}\bmod 3^v}$. This shows that condition (a) holds for
this $x'$ and~$y'$, and completes the proof of the lemma.
\end{proof}

\begin{lemma}
\label{L:logs}
\quad
\begin{enumerate}
\item \label{logbase3mod2}
      Let $z$ be an integer with $z\equiv 1 \bmod 8$. For every integer $u\ge 3$
      there is an integer~$e_0$ such that the integers $e$ that satisfy
      $3^e \equiv z \bmod 2^u$ are precisely the integers $e$ that satisfy 
      $e\equiv e_0\bmod 2^{u-2}$. If $x\le u$ is an integer with 
      $z\equiv 1 \bmod 2^x$, then $e_0$ is divisible by $2^{x-2}$.
\item \label{logbase2mod3}
      Let $z$ be an integer with $z\equiv 1\bmod 3$. For every integer $v\ge 1$
      there is an integer~$e_0$ such that the integers $e$ that satisfy 
      $2^e \equiv z \bmod 3^v$ are precisely the integers $e$ that satisfy
      $e\equiv e_0\bmod 2\cdot 3^{v-1}$. If $y\le v$ is an integer with 
      $z\equiv 1 \bmod 3^y$, then $e_0$ is divisible by $2\cdot 3^{y-1}$.
\end{enumerate}
\end{lemma}

\begin{proof}
For statement~\ref{logbase3mod2}: We leave the reader to show that for every
$u\ge 3$, the order of $3$ modulo $2^u$ is~$2^{u-2}$. (The proof can be modeled 
after the proof of~\cite[Theorem~10.11, p.~218]{Apostol1976}.) Since there are 
$2^{u-1}$ units in $\ZZ/2^u\ZZ$, and the order of $3$ is half of this, it 
follows that half of the units are powers of~$3$. A power of $3$ is never 
congruent to $5$ or $7$ modulo~$8$, and this accounts for half of the units. 
Therefore, every unit that is $1$ or $3$ modulo~$8$ is a power of~$3$. Thus, 
there is an $e_0$ such that $3^{e_0} \equiv z$. The fact that 
$3^e \equiv z \bmod 2^u$ if and only if $e \equiv e_0\bmod 2^{u-2}$ is simply a
consequence of the fact that the order of $3$ modulo $2^u$ is $2^{u-2}$.

If $z\equiv 1 \bmod 2^x$ with $x\le u$, then $3^{e_0} \equiv 1 \bmod 2^x$, so
$e_0$ is a multiple of the order of $3$ modulo $2^x$, and hence $e_0$ is
divisible by~$2^{x-2}$.

The proof of statement~\ref{logbase2mod3} is analogous, and we leave it to the
reader.
\end{proof}

When we look at cases of equation~\eqref{EQ:sumof2s} with larger values of~$n$, 
we will find that Lemma~\ref{L:newsolution} tells us that we will need to
include information gleaned from moduli divisible by primes $p$ such that the
order of $3$ modulo $p$ is divisible by quite large powers of~$2$. In 
Section~\ref{S:2} we show how we can work our way up to such moduli.

\section{Lifting solutions}
\label{S:lifting}

Our proofs of Theorems~\ref{T:POWERSOF2} and~\ref{T:POWERSOF3} are 
computational. In each proof, we consider a sequence of moduli 
$M_1, M_2, \ldots$, each dividing the next. Roughly speaking, we first compute
the solutions to equation~\eqref{EQ:sumof2s} or~\eqref{EQ:sumof3s} modulo~$M_1$;
then for each $i>1$ in turn we ``lift'' the solutions modulo~$M_{i-1}$ to 
solutions modulo~$M_i$. We stop when we have reached an $M_i$ where all of the
summands that appear on the right-hand side of the solutions modulo~$M_i$ are
determinate (in the sense defined in Section~\ref{S:prelim}); at that point, 
each solution modulo~$M_i$ can be lifted uniquely to a solution in the integers,
if it lifts to a solution at all.

This strategy depends on our having efficient methods for lifting a solution 
modulo~$M_{i-1}$ to a solution modulo~$M_i$. In Section~\ref{S:2} we will spell
out our methods more formally, but in this section we would like to give two 
examples to help make the methods more clear. For the sake of exposition, we 
will focus on finding solutions to equation~\eqref{EQ:sumof2s} modulo~$M$ for
various $M$, and as we did in the introduction, we will ignore the requirement
that the summands be distinct.

As an example of one extreme case of the lifting problem, let $M_1 = 439$ and
let $n=12$ and consider the following solution to equation~\eqref{EQ:sumof2s} 
modulo~$M_1$:
\begin{equation}
\label{EQ:liftexample1}
3^{57} \equiv 2^0 + 2^1 + 2^{11} + 2^{12} + 2^{15} + 2^{16} + 2^{26} 
                        + 2^{27} + 2^{37} + 2^{57} + 2^{65} + 2^{68}.
\end{equation}
Let $p$ be the prime $9361973132609$ and let $M_2 = p M_1$. We will try to find
a lift of the solution~\eqref{EQ:liftexample1} to a solution modulo~$M_2$. We
compute that the graph of the powers of $2$ modulo~$M_1$ forms a loop of cycle
length $73$ with no tail\ellipsis\ and we compute that the graph of powers of
$2$ modulo~$M_2$ is \emph{also} a tailless loop of cycle length~$73$. That means
that there is \emph{exactly one} power of $2$ in $\ZZ/M_2\ZZ$ that reduces to a
given power of $2$ in $\ZZ/M_1\ZZ$. If we can lift 
equation~\eqref{EQ:liftexample1} to a solution modulo~$M_2$, then the right-hand
side of the lifted solution will have to be
\[
2^0 + 2^1 + 2^{11} + 2^{12} + 2^{15} + 2^{16} + 2^{26} 
          + 2^{27} + 2^{37} + 2^{57} + 2^{65} + 2^{68} \bmod M_2.
\]
If we let $z$ be this sum, then to determine whether there is a lift of
equation~\eqref{EQ:liftexample1} to a solution modulo~$M_2$, we simply have to 
determine whether there is an $x$ such that $3^x \equiv z\bmod M_2$.

It turns out that the graph of powers of $3$ modulo~$M_2$ is a tailless loop 
with cycle length $p-1 = 9361973132608$, so we definitely do \emph{not} want to
find $x$ (if it exists) by enumeration. Instead, we can find $x$ by using 
discrete logarithms.

If there is an $x$ with $3^x \equiv z \bmod M_2$, then that same $x$ satisfies 
$3^x \equiv z \bmod p$ for the prime $p = M_2/M_1$. We can find an $x$ that 
satisfies this congruence if and only if $z\in(\ZZ/p\ZZ)^*$ lies in the subgroup
of $(\ZZ/p\ZZ)^*$ generated by~$3$. Using the computer algebra package Magma, we
find that in fact $3$ generates the whole group of units, and Magma very quickly
computes a discrete logarithm of $z$ with respect to~$3$ --- that is, an integer
$x$ with $3^x \equiv z \bmod p$. In fact, every integer $x$ satisfying
\begin{equation}
\label{EQ:logbasep}
x \equiv 3976447101915\bmod (p-1)
\end{equation}
will give a solution to this congruence.

In order for $x$ to give a solution modulo~$M_2$, we also need to have
$3^x \equiv z \bmod M_1$. The graph of powers of $3$ modulo~$M_1$ is a tailless
loop with cycle length $146$, and we find that for $x$ to solve this congruence 
modulo $M_1$ we need to have $x \equiv 57 \bmod 146$.

But $146$ is a divisor of $p-1$, and reducing equation~\eqref{EQ:logbasep} 
modulo~$146$, we find that it becomes $x \equiv 31 \bmod 146$. This is 
incompatible with the congruence from the preceding paragraph, so there is no
$x$ with $3^x \equiv z \bmod M_2$. This shows that 
equation~\eqref{EQ:liftexample1} cannot be lifted to a solution modulo~$M_2$.

Let us turn to another example, which demonstrates a different approach to the
lifting problem. We again take $M_1 = 439$ and start with the solution to 
equation~\eqref{EQ:sumof2s} modulo~$M_1$ given by~\eqref{EQ:liftexample1}. This
time, however, we take $p = 1753$ and $M_2 = p M_1$. We will try to find a lift
of the solution~\eqref{EQ:liftexample1} to a solution modulo~$M_2$.

The graph of powers of $2$ modulo~$M_2$ is a tailless loop of cycle 
length~$146$, which is exactly twice as long as the cycle of powers of $2$
modulo~$M_1$. That means that there are \emph{exactly two} powers of $2$ 
modulo~$M_2$ that reduce to a given power of $2$ modulo~$M_1$. In particular,
the two lifts to $\ZZ/M_2\ZZ$ of the element $2^i\in \ZZ/M_1\ZZ$ are $2^i$
and~$2^{i + 73}$.

Similarly, we can also compute that there are six lifts of $3^{57}\in\ZZ/M_1\ZZ$
to powers of $3$ in $\ZZ/M_2\ZZ$, namely $3^{57}$, $3^{203}$, $3^{349}$, 
$3^{495}$, $3^{641}$, and $3^{787}$.

We see that every summand on the right-hand side of~\eqref{EQ:liftexample1} has
two lifts to $\ZZ/M_2\ZZ$, and the left-hand side has six lifts. In principle,
we could compute all $6\cdot 2^{12} = 24{,}576$ lifts of the terms appearing
in~\eqref{EQ:liftexample1} and check to see which combinations of lifts give us
an equality modulo~$M_2$, but this would be inefficient\ellipsis\ and for larger
values of~$n$, it would become more and more inefficient. 

Instead, we use a ``meet in the middle'' technique. We rewrite 
equation~\eqref{EQ:liftexample1} to get the following congruence modulo~$M_1$:
\begin{equation}
\label{EQ:liftexample2}
3^{57} - 2^0 - 2^1 - 2^{11} - 2^{12} - 2^{15} 
\equiv
2^{16} + 2^{26} + 2^{27} + 2^{37} + 2^{57} + 2^{65} + 2^{68}.
\end{equation}
There are $6\cdot 2^5 = 192$ lifts to $\ZZ/M_2\ZZ$ of the terms appearing on the
left-hand side of~\eqref{EQ:liftexample2}, and $2^7 = 128$ lifts of the terms on
the right-hand side. We compute the values (modulo~$M_2$) of all of the 
left-hand lifts, and the values of all of the right-hand lifts, and then compare
the two lists to see whether there are any values in common. (We can quickly
find these common values if we sort each list first.) Each such common value $w$
gives us one (or more) lifts to $\ZZ/M_2\ZZ$ of~\eqref{EQ:liftexample2}, and 
hence also of~\eqref{EQ:liftexample1}. And clearly, all solutions 
to~\eqref{EQ:sumof2s} modulo~$M_2$ that are lifts of~\eqref{EQ:liftexample1}
will arise in this way. In point of fact, for this particular example we found 
eight values of~$w$, from which we obtained eight solutions
to~\eqref{EQ:sumof2s} in $\ZZ/M_2\ZZ$ that were lifts 
of~\eqref{EQ:liftexample1}.

The two techniques we have demonstrated here for lifting solutions 
of~\eqref{EQ:sumof2s} modulo~$M_1$ to solutions modulo~$M_2$ are the basis for
the procedure for proving Theorem~\ref{T:POWERSOF2} that we sketch in the 
following section.

\section{Proof of Theorem~\ref{T:POWERSOF2}}
\label{S:2}

To prove Theorem~\ref{T:POWERSOF2} we consider the sequence of moduli~$M_i$,
where $M_i = \prod_{j\le i}m_j$ for the factors $m_1$, \ldots, $m_{64}$ listed
in Table~\ref{T:2}, so that each $M_i$ divides the next. As we explained in 
Section~\ref{S:lifting}, roughly speaking we first compute the solutions to 
equation~\eqref{EQ:sumof2s} in $\ZZ/M_1\ZZ$; then, using the ideas sketched out 
in the examples in Section~\ref{S:lifting}, we lift the solutions to
$\ZZ/M_2\ZZ$, then to $\ZZ/M_3\ZZ$, then to $\ZZ/M_4\ZZ$, and so on, stopping
when we have reached an $M_i$ where all of the powers of~$2$ that appear in the
solutions are determinate. If all the powers of $2$ in a solution are 
determinate, the solution can be lifted uniquely to a solution in the integers,
if it lifts to a solution at all.

\afterpage{%
\clearpage
\begin{landscape}
\centering 
 
\begin{table}
\label{T:2}        
\caption{Data for the factors $m_i$ and the moduli $M_i = \prod_{j\le i} m_j$
         used in the proof of Theorem~\ref{T:POWERSOF2}. The notation in the
         table headings is as in Notation~\ref{N:23}.}
\footnotesize
\renewcommand{\arraystretch}{0.925} 
\begin{tabular}{r r r r r c c r r r r r c}
\toprule
 $i$ & $m_i$                              & $O_2(m_i)$         & $O_2(M_i)$         & $O_3'(m_i)$              & $v_2(O_3'(M_i))$ &\hbox to 0em{} & $i$  & $m_i$                    & $O_2(m_i)$         & $O_2(M_i)$         & $O_3'(m_i)$              & $v_2(O_3'(M_i))$ \\
\cmidrule{1-6}\cmidrule{8-13}
 $1$ & $2^{4} \cdot 7 \cdot 73$           & $             3^2$ & $             3^2$ & $     3 \cdot 2^{ 2\pz}$ & $\pz 2$          &               & $32$ & $113246209$              & $2^{20} \cdot 3^2$ & $2^{20} \cdot 3^2$ & $    27 \cdot 2^{19}$    & $20$ \\
 $2$ & $3^{3} \cdot 19$                   & $2      \cdot 3^2$ & $2      \cdot 3^2$ & $     9 \cdot 2^{ 1\pz}$ & $\pz 2$          &               & $33$ & $319489$                 & $2^{12} \cdot 3^0$ & $2^{20} \cdot 3^2$ & $    39 \cdot 2^{ 8\pz}$ & $20$ \\
 $3$ & $5 \cdot 13 \cdot 37 \cdot 109$    & $2^{ 2} \cdot 3^2$ & $2^{ 2} \cdot 3^2$ & $    27 \cdot 2^{ 2\pz}$ & $\pz 2$          &               & $34$ & $1084521185281$          & $2^{21} \cdot 3^2$ & $2^{21} \cdot 3^2$ & $ 43095 \cdot 2^{22}$    & $22$ \\
 $4$ & $241 \cdot 433$                    & $2^{ 3} \cdot 3^2$ & $2^{ 3} \cdot 3^2$ & $   135 \cdot 2^{ 3\pz}$ & $\pz 3$          &               & $35$ & $2^{2}$                  &          ---\pz    & $2^{21} \cdot 3^2$ &         ---\pz\pz        & $22$ \\
 $5$ & $17$                               & $2^{ 3} \cdot 3^0$ & $2^{ 3} \cdot 3^2$ & $             2^{ 4\pz}$ & $\pz 4$          &               & $36$ & $7348420609$             & $2^{22} \cdot 3^1$ & $2^{22} \cdot 3^2$ & $    73 \cdot 2^{24}$    & $24$ \\
 $6$ & $2^{2}$                            &         ---\pz     & $2^{ 3} \cdot 3^2$ &         ---\pz\pz        & $\pz 4$          &               & $37$ & $2^{2}$                  &         ---\pz     & $2^{22} \cdot 3^2$ &         ---\pz\pz        & $24$ \\
 $7$ & $38737$                            & $2^{ 3} \cdot 3^2$ & $2^{ 3} \cdot 3^2$ & $  2421 \cdot 2^{ 3\pz}$ & $\pz 4$          &               & $38$ & $448203325441$           & $2^{23} \cdot 3^1$ & $2^{23} \cdot 3^2$ & $ 26715 \cdot 2^{21}$    & $24$ \\
 $8$ & $97 \cdot 577$                     & $2^{ 4} \cdot 3^2$ & $2^{ 4} \cdot 3^2$ & $     3 \cdot 2^{ 4\pz}$ & $\pz 4$          &               & $39$ & $1107296257$             & $2^{24} \cdot 3^1$ & $2^{24} \cdot 3^2$ & $    11 \cdot 2^{22}$    & $24$ \\
 $9$ & $257 \cdot 673$                    & $2^{ 4} \cdot 3^1$ & $2^{ 4} \cdot 3^2$ & $    21 \cdot 2^{ 8\pz}$ & $\pz 8$          &               & $40$ & $167772161$              & $2^{24} \cdot 3^0$ & $2^{24} \cdot 3^2$ & $     5 \cdot 2^{25}$    & $25$ \\
$10$ & $2^{4}$                            &         ---\pz     & $2^{ 4} \cdot 3^2$ &         ---\pz\pz        & $\pz 8$          &               & $41$ & $2$                      &         ---\pz     & $2^{24} \cdot 3^2$ &         ---\pz\pz        & $25$ \\
$11$ & $193 \cdot 1153$                   & $2^{ 5} \cdot 3^2$ & $2^{ 5} \cdot 3^2$ & $     9 \cdot 2^{ 6\pz}$ & $\pz 8$          &               & $42$ & $74490839041$            & $2^{26} \cdot 3^1$ & $2^{26} \cdot 3^2$ & $   185 \cdot 2^{26}$    & $26$ \\
$12$ & $6337$                             & $2^{ 5} \cdot 3^2$ & $2^{ 5} \cdot 3^2$ & $    99 \cdot 2^{ 4\pz}$ & $\pz 8$          &               & $43$ & $2$                      &         ---\pz     & $2^{26} \cdot 3^2$ &         ---\pz\pz        & $26$ \\
$13$ & $65537$                            & $2^{ 5} \cdot 3^0$ & $2^{ 5} \cdot 3^2$ & $             2^{16}$    & $   16$          &               & $44$ & $246423748609$           & $2^{26} \cdot 3^1$ & $2^{26} \cdot 3^2$ & $    27 \cdot 2^{28}$    & $28$ \\
$14$ & $2^{8}$                            &         ---\pz     & $2^{ 5} \cdot 3^2$ &         ---\pz\pz        & $   16$          &               & $45$ & $2^{2}$                  &         ---\pz     & $2^{26} \cdot 3^2$ &         ---\pz\pz        & $28$ \\
$15$ & $641$                              & $2^{ 6} \cdot 3^0$ & $2^{ 6} \cdot 3^2$ & $     5 \cdot 2^{ 7\pz}$ & $   16$          &               & $46$ & $29796335617$            & $2^{27} \cdot 3^1$ & $2^{27} \cdot 3^2$ & $   111 \cdot 2^{24}$    & $28$ \\
$16$ & $769$                              & $2^{ 7} \cdot 3^1$ & $2^{ 7} \cdot 3^2$ & $     3 \cdot 2^{ 4\pz}$ & $   16$          &               & $47$ & $3221225473$             & $2^{28} \cdot 3^1$ & $2^{28} \cdot 3^2$ & $             2^{27}$    & $28$ \\
$17$ & $274177$                           & $2^{ 7} \cdot 3^0$ & $2^{ 7} \cdot 3^2$ & $   153 \cdot 2^{ 5\pz}$ & $   16$          &               & $48$ & $77309411329$            & $2^{29} \cdot 3^1$ & $2^{29} \cdot 3^2$ & $             2^{30}$    & $30$ \\
$18$ & $18433$                            & $2^{ 8} \cdot 3^2$ & $2^{ 8} \cdot 3^2$ & $     9 \cdot 2^{ 9\pz}$ & $   16$          &               & $49$ & $2^{2}$                  &         ---\pz     & $2^{29} \cdot 3^2$ &         ---\pz\pz        & $30$ \\
$19$ & $101377$                           & $2^{ 9} \cdot 3^2$ & $2^{ 9} \cdot 3^2$ & $    99 \cdot 2^{ 9\pz}$ & $   16$          &               & $50$ & $5469640851457$          & $2^{30} \cdot 3^1$ & $2^{30} \cdot 3^2$ & $   849 \cdot 2^{30}$    & $30$ \\
$20$ & $2424833$                          & $2^{10} \cdot 3^0$ & $2^{10} \cdot 3^2$ & $    37 \cdot 2^{16}$    & $   16$          &               & $51$ & $28114855919617$         & $2^{31} \cdot 3^1$ & $2^{31} \cdot 3^2$ & $  3273 \cdot 2^{30}$    & $30$ \\
$21$ & $12289$                            & $2^{11} \cdot 3^1$ & $2^{11} \cdot 3^2$ & $             2^{ 9\pz}$ & $   16$          &               & $52$ & $1095981164658689$       & $2^{31} \cdot 3^0$ & $2^{31} \cdot 3^2$ & $127589 \cdot 2^{33}$    & $33$ \\
$22$ & $974849$                           & $2^{12} \cdot 3^0$ & $2^{12} \cdot 3^2$ & $   119 \cdot 2^{13}$    & $   16$          &               & $53$ & $2^{3}$                  &         ---\pz     & $2^{31} \cdot 3^2$ &         ---\pz\pz        & $33$ \\
$23$ & $114689$                           & $2^{13} \cdot 3^0$ & $2^{13} \cdot 3^2$ & $     7 \cdot 2^{14}$    & $   16$          &               & $54$ & $87211$                  & $2      \cdot 3^3$ & $2^{31} \cdot 3^3$ & $  2907 \cdot 2^{ 0\pz}$ & $33$ \\
$24$ & $39714817$                         & $2^{14} \cdot 3^1$ & $2^{14} \cdot 3^2$ & $   101 \cdot 2^{12}$    & $   16$          &               & $55$ & $5566277615617$          & $2^{32} \cdot 3^3$ & $2^{32} \cdot 3^3$ & $     3 \cdot 2^{32}$    & $33$ \\
$25$ & $1179649$                          & $2^{15} \cdot 3^2$ & $2^{15} \cdot 3^2$ & $     9 \cdot 2^{16}$    & $   16$          &               & $56$ & $25048249270273$         & $2^{33} \cdot 3^3$ & $2^{33} \cdot 3^3$ & $    81 \cdot 2^{34}$    & $34$ \\
$26$ & $7908360193$                       & $2^{15} \cdot 3^2$ & $2^{15} \cdot 3^2$ & $   419 \cdot 2^{20}$    & $   20$          &               & $57$ & $2$                      &         ---\pz     & $2^{33} \cdot 3^3$ &         ---\pz\pz        & $34$ \\
$27$ & $2^{4}$                            &         ---\pz     & $2^{15} \cdot 3^2$ &         ---\pz\pz        & $   20$          &               & $58$ & $942556342910977$        & $2^{34} \cdot 3^3$ & $2^{34} \cdot 3^3$ & $  1143 \cdot 2^{37}$    & $37$ \\
$28$ & $171048961$                        & $2^{16} \cdot 3^2$ & $2^{16} \cdot 3^2$ & $  1305 \cdot 2^{15}$    & $   20$          &               & $59$ & $2^{3}$                  &         ---\pz     & $2^{34} \cdot 3^3$ &         ---\pz\pz        & $37$ \\
$29$ & $786433$                           & $2^{17} \cdot 3^1$ & $2^{17} \cdot 3^2$ & $             2^{16}$    & $   20$          &               & $60$ & $206158430209$           & $2^{35} \cdot 3^1$ & $2^{35} \cdot 3^3$ & $             2^{33}$    & $37$ \\
$30$ & $14155777$                         & $2^{18} \cdot 3^2$ & $2^{18} \cdot 3^2$ & $    27 \cdot 2^{18}$    & $   20$          &               & $61$ & $2748779069441$          & $2^{37} \cdot 3^0$ & $2^{37} \cdot 3^3$ & $5      \cdot 2^{39}$    & $39$ \\
$31$ & $13631489$                         & $2^{19} \cdot 3^0$ & $2^{19} \cdot 3^2$ & $             2^{20}$    & $   20$          &               & $62$ & $2^{2}$                  &         ---\pz     & $2^{37} \cdot 3^3$ &         ---\pz\pz        & $39$ \\
\bottomrule
\end{tabular}
\end{table}
\end{landscape}
\clearpage
}

To be more precise: For a given~$i$, we write $M_i = 2^{u_i} 3^{v_i} M_i'$ where
$M_i'$ is coprime to~$6$. As we noted in Section~\ref{S:prelim}, there are 
$u_i + O_2(M_i)$ distinct powers of $2$ modulo~$M_i$, and $v_i + O_3(M_i)$
distinct powers of~$3$. For each $M_i$ in turn, we set $M=M_i$ and compute the
solutions $(x,a_1,\ldots,a_n)$ to
\begin{equation}
\label{EQ:sumof2smodM}
\begin{cases}
&3^x \equiv 2^{a_1} + \cdots + 2^{a_n}  \bmod M\\
&0\le x < v + O_3(M)\\
&0 = a_1\le \cdots \le a_n < u + O_2(M),
\end{cases}
\end{equation}
with the added condition that for every pair $(j,k)$ of indices with $j\neq k$,
if $a_j$ and $a_k$ are both less than $u_i$, then $a_j\neq a_k$. This last
condition reflects the fact that if $a < u_i$, then $2^a$ is a determinate power
of $2$ in $\ZZ/M_1\ZZ$, and the right-hand side exponents in the solutions to 
equation~\eqref{EQ:sumof2s} are required to be distinct. (Note that the upper
bounds given in~\eqref{EQ:sumof2smodM} have the effect of keeping us in line 
with Convention~\ref{convention}.)

For $M_1 = 2^4 \cdot 7\cdot 73$ we compute the solutions 
to~\eqref{EQ:sumof2smodM} by brute force. The powers of $2$ in $\ZZ/M_1\ZZ$ are 
$2^0$ through~$2^{12}$. To every $n$-tuple $(a_1,\ldots,a_n)$ of exponents 
between $0$ and $12$ with $0 = a_1\le \cdots \le a_n$, we can associate the 
$13$-tuple $(b_0,\ldots,b_{12})$, where $b_i$ is the number of $a_j$ that are 
equal to~$i$. Then instead of enumerating all of the $n$-tuples 
$(a_1,\ldots,a_n)$, we can simply run through all of the $13$-tuples 
$(b_0,\ldots,b_{12})$ of non-negative integers such that
\[
b_0 + \cdots + b_{12} = n
\]
and 
\[
b_0 = 1, \quad b_1\le 1, \quad b_2\le 1, \quad\text{and\ } b_3\le 1.
\]
When we find such a $13$-tuple with the additional property that $\sum b_j 2^j$ 
is congruent to $3^x$ modulo $M_1$ for one of the $12$ powers of $3$ modulo~$M_1$, 
we can compute the associated $n$-tuple $(a_1,\ldots,a_n)$ and add
$(x,a_1,\ldots,a_n)$ to our list of solutions of equation~\eqref{EQ:sumof2smodM}
with $M = M_1$. We obtain all solutions to the equation in this way.

Now suppose we have a list of solutions to~\eqref{EQ:sumof2smodM} with 
$M = M_{i-1}$, and we want to create the list of solutions with $M = M_i$, where
$M_i = m_i M_{i-1}$. Write $M_i = 2^{u_i} 3^{v_i} M_i'$ with $M_i'$ coprime
to~$6$. For each solution $(x, a_1, \ldots, a_n)$ to the problem modulo~$M_{i-1}$,
we go through the following steps.

\subsection*{Step One:}
\emph{Compute the powers of $2$ in $\ZZ/M_i\ZZ$ that lift the 
$2^{a_j} \in \ZZ/M_{i-1}\ZZ$.} 
      
For each $j=1, \ldots, n$, we compute a list $A_{j}$ of the values of $a'$ with
$0 \le a' < u_i + O_2(M_i)$ such that $2^{a'} \equiv 2^{a_j} \bmod M_{i-1}$.

\subsection*{Step Two:}
\emph{Compute the number of powers of $3$ in $\ZZ/M_i\ZZ$ that lift 
$3^x \in \ZZ/M_{i-1}\ZZ$.}
      
Let $\chi$ denote the number of values of $x'$ with $0 \le x' < v_i + O_3(M_i)$
such that $3^{x'} \equiv 3^{x}\bmod M_{i-1}$. If $3^x$ is a determinate power of
$3$ modulo~$M_{i-1}$, then $\chi = 1$. If $3^x$ is an indeterminate power of $3$
modulo~$M_{i}$, then $\chi = O_3(M_i)/O_3(M_{i-1})$. And if $3^x$ is 
indeterminate modulo $M_{i-1}$ but determinate modulo $M_{i}$, then
$\chi = 1 + O_3(M_i)/O_3(M_{i-1})$.

\subsection*{Step Three:}
\emph{Compute the lifted solutions.}

We compute lifted solutions in one of two ways; to decide between the two
methods, we check to see whether $\chi >  \prod_{j=1}^n \#A_j$ and whether $m_i$
is a prime that does not divide $6M_{i-1}$. If both these conditions hold, we 
say we are in the \emph{unbalanced} case, and if not we say we are in the 
\emph{balanced} case.

\begin{enumerate}
\item \emph{The unbalanced case.} In this case we must have $\chi>1$, so $3^x$ 
      is an indeterminate power of $3$ modulo~$M_{i-1}$; also, in this case we
      have $v_i = v_{i-1}$ because $m_i\neq 3$. We proceed as follows, for each 
      $n$-tuple $(a_1', \ldots, a_n')$ in $A_1\times\cdots\times A_n$:
      \begin{enumerate}
      \item \emph{Compute the right-hand side sum.}
            Set $s \colonequals \sum_{j} 2^{a_j'}$.
      \item \emph{Check to see whether the right-hand side sum is a power 
            of~$3$ modulo~$M_i$.} To check to see whether there is a power of
            $3$, say $3^{x'}$, with $3^{x'}\equiv s\bmod M_i$, we use discrete
            logarithms as follows.

            \qquad Let $g$ be a generator of the group of units of $\ZZ/m_i\ZZ$,
            let $z$ be the smallest non-negative integer with
            $g^z \equiv s\bmod m_i$, and let $y$ be the smallest positive
            integer with $g^y \equiv 3\bmod m_i$, so that $z$ and $y$ are
            discrete logarithms of $s$ and of $3$ with respect to the base~$g$.
            If there is an $x'$ such that $3^{x'}\equiv s\bmod M_i$, then for
            this $x'$ we have $3^{x'}\equiv s\bmod m_i$, so we must have 
            ${x' y \equiv z \bmod (p-1)}$; for this $x'$ we have
            $3^{x'}\equiv s\bmod 2^{v_{i-1}} M_{i-1}$, so we must have 
            $x' \equiv x\bmod O_3(M_{i-1})$; and for this $x'$ we have
            $3^{x'} \equiv 3^x \equiv 0\bmod 3^{v_i}$, so we must have 
            $x' \ge v_{i}$. Conversely, any $x'$ that satisfies these three
            conditions will also satisfy $3^{x'}\equiv s \bmod M_i$.
            
            \qquad For primes $m_i$ of the size we are considering, the 
            computation of the discrete logarithms $z$ and $y$ is easily done by
            the computer algebra package Magma, in which we have written our 
            code. It is also a straightforward matter to compute the values of
            $x'$ that meet the three conditions, if any exist. 
            
            \qquad For each $x'$ that we find, we add $(x', a'_1, \ldots, a'_n)$
            to our list of solutions of equation~\eqref{EQ:sumof2smodM} with 
            $M = M_i$.
      \end{enumerate}
      The time required to carry out this step is proportional to the number of 
      $n$-tuples  $(a'_1, \ldots, a'_n)$ that we have to consider, which 
      is~$\prod \#A_i$.
\smallskip      
\item \emph{The balanced case.} We proceed as follows.
      \begin{enumerate}
      \item \emph{Compute the left-hand side lifts.} We compute the set $X$ of 
            the values of $x'$ with $0 \le x' < v_i + O_3(M_i)$ such that 
            $3^{x'} \equiv 3^{x} \bmod M_{i-1}$.  
      \item \emph{Group the variables into two balanced sets.} Compute the
            value of $k$ so that the product $\#X \cdot \prod_{j\le k} \#A_j$ 
            and the product $\prod_{j>k} \#A_j$ are as close in size as 
            possible. 
      \item \emph{Compute the lifts of the variables in each grouping.} We make
            two lists. The first is the list of all $(k+2)$-tuples
            \[ 
            (3^{x'} - 2^{a_1'} - \cdots - 2^{a_k'}, x', a_1', \ldots, a_k') 
            \]
            for all 
            $(x', a_1', \ldots, a_k') \in X \times A_1 \times \cdots \times A_k$,
            where we view the first entry of the tuple as an element of 
            $\ZZ/M_i\ZZ$. The second is the list of all $(n-k+1)$-tuples
            \[ 
            (2^{a_{k+1}'} + \cdots + 2^{a_n'}, a_{k+1}', \ldots, a_n')
            \]
            for all 
            $(a_{k+1}', \ldots, a_n') \in A_{k+1} \times \cdots \times A_n$,
            where again we view the first entry as an element of $\ZZ/M_i\ZZ$.
      \item \emph{Compare the lists for matching values.} Sort each of these 
            lists according to the value of the first entry of each tuple, and
            then compare the two sorted lists to find all pairs of elements, one
            from the first list and one from the second, whose first entries are
            equal. Every such pair gives us a solution to
            \[
            3^{x'} \equiv 2^{a_1'} + \cdots + 2^{a_n'} \text{ in } \ZZ/M_i\ZZ
            \]
            that reduces to our original solution in $\ZZ/M_{i-1}\ZZ$. Add each
            such solution to our list of solutions of 
            equation~\eqref{EQ:sumof2smodM} with $M = M_i$.
      \end{enumerate}
      The time it takes to carry out this step is proportional to the larger of 
      ${\#X \cdot \prod_{j\le k} \#A_j}$ and $\prod_{j>k} \#A_j$. If these two 
      numbers are somewhat balanced, the time required for this step will be 
      roughly proportional to the square root of 
      ${\#X \cdot \prod_{j\le n} \#A_j}$.
\end{enumerate}

Once we have computed all of the solutions to equation~\eqref{EQ:sumof2smodM}  
with $M = M_i$ by this method, we check to see whether all of the powers of $2$
that occur anywhere on our list are determinate. If they are not, then we 
increase $i$ by $1$ and iterate the procedure. If they are, then for each 
solution to~\eqref{EQ:sumof2smodM} with $M = M_i$, we can check to see whether 
the (unique) lifts of the terms in the right-hand side of~\eqref{EQ:sumof2smodM}
to powers of $2$ in $\ZZ$ add up to a power of~$3$. In this way, we hope to find
all solutions to~\eqref{EQ:sumof2s}.

\begin{proof}[Proof of Theorem~\textup{\ref{T:POWERSOF2}}]
We ran through the procedure described above for all values of $n$ from $3$
to~$22$. For each~$n$, the procedure did terminate before we ran out of values 
of $M_i$, so we successfully found all solutions to equation~\eqref{EQ:sumof2s}
for $n\le 22$. We found that the binary representation of $3^x$ has at most
twenty-two bits equal to $1$ exactly when $x\le 25$.
\end{proof}

In Table~\ref{T:timing2}, we give for each $n$ the value of $i$ for which the 
modulus $M_i$ gave us all solutions to the equation. We also give the total time
for the computation. As mentioned earlier, the programs we used to implement 
this computation were written in Magma and are available as
supplementary material attached to the ArXiv version of this 
paper~\cite{DimitrovHowe2021}, as well as on the second author's web site.

\begin{table}
\label{T:timing2}
\centering
\caption{For each $n$, we list the value of $i$ such that our procedure for 
         solving equation~\eqref{EQ:sumof2s} iterated up to the modulus $M_i$ 
         from Table~\ref{T:2}. We also give the wall-clock time it took for the
         computation to complete on a 2.8 GHz Quad-Core Intel Core i7 with 
         16GB RAM running Magma V2.23-1 on Mac OS 11.2.3. For $n\ge 20$ the 
         computation was split into parts that were run by separate processes;
         the time given is the sum of the wall-clock times for each process.}
\begin{tabular}{r r c c r r c}
\toprule
 $n$ &  $i$ & Time (sec) &\hbox to 1em{} &  $n$ &  $i$ & Time (sec)         \\
\cmidrule{1-3}\cmidrule{5-7} 
 $3$ & $10$ &  $0.01$    &               & $13$ & $37$ & $\phantom{0000}19$ \\
 $4$ & $10$ &  $0.02$    &               & $14$ & $45$ & $\phantom{0000}52$ \\
 $5$ & $14$ &  $0.04$    &               & $15$ & $45$ & $\phantom{000}145$ \\
 $6$ & $14$ &  $0.07$    &               & $16$ & $59$ & $\phantom{000}457$ \\
 $7$ & $14$ &  $0.14$    &               & $17$ & $59$ & $\phantom{00}1469$ \\
 $8$ & $14$ &  $0.29$    &               & $18$ & $62$ & $\phantom{00}5746$ \\
 $9$ & $14$ &  $0.62$    &               & $19$ & $62$ & $\phantom{0}17744$ \\
$10$ & $27$ &  $1.54$    &               & $20$ & $62$ & $\phantom{0}53617$ \\
$11$ & $37$ &  $3.81$    &               & $21$ & $62$ & $          139347$ \\
$12$ & $37$ &  $8.03$    &               & $22$ & $62$ & $          743737$ \\
\bottomrule
\end{tabular}
\end{table}

The procedure we described in the proof of Theorem~\ref{T:POWERSOF2} suggests
the properties we looked for when choosing the factors $m_i$ out of which our 
moduli $M_i$ are built. In the balanced case, we want the sets $A_j$ to be as
small as possible, since the work in the balanced case is roughly on the order
of the square root of the product $\#X \cdot \prod_{j\le n} \#A_j$. Of course,
we'd like $\#X$ to be small as well, but since there are $n$ sets $A_j$ we
concentrate first on them. 

For a given solution $(x,a_1,\ldots,a_n)$ to~\eqref{EQ:sumof2smodM} with 
$M = M_{i-1}$, how large are the~$A_j$? The answer is analogous to the 
computation of the value of $\chi$ given in Step~Two of our procedure. Suppose
we are in the case where $m_i$ is odd. If $2^{a_j}$ is a determinate power of
$2$ modulo~$M_{i-1}$, then $\#A_j = 1$. If $2^{a_j}$ is indeterminate 
modulo~$M_{i-1}$, then it is indeterminate modulo~$M_{i}$ as well because $m_i$
is odd, and we have $\#A_j = O_2(M_i)/O_2(M_{i-1})$. If $m_i$ is coprime to 
$M_{i-1}$, which is the case for all of the values we chose, then $O_2(M_i)$ is
the least common multiple of $O_2(m_i)$ and $O_2(M_{i-1})$. 

The ideal case would be for $O_2(m_i)$ to be a divisor of $O_2(M_{i-1})$, so
that the ratio $O_2(M_i)/O_2(M_{i-1})$ would be~$1$. The next-best case would be 
for $O_2(m_i)$ to divide $2 O_2(M_{i-1})$ but not $O_2(M_{i-1})$, so that 
$O_2(M_i)/O_2(M_{i-1})$ would be~$2$. We were able to stay in these two cases 
for every $i$ with $m_i$ odd, except for $i = 54$, where we have
$O_2(M_i)/O_2(M_{i-1}) = 3$.

For those $i$ for which $O_2(M_i)/O_2(M_{i-1}) = 1$, we can focus more on the
unbalanced case. These $i$ give us the opportunity to build up the number of
powers of $2$ in $O_3'(M_i)$. For example, for $i=13$ we have
$O_2(M_i)/O_2(M_{i-1}) = 1$, and with the value of $m_i$ that we chose, we
increase the $2$-part of the order of $3$ from $2^8$ in $O_3'(M_{i-1})$ to 
$2^{16}$ in $O_3'(M_i)$.

We found our $m_i$ mostly by looking for primes $p$ congruent to $1$ modulo
$2^a 3^b$ for various values of $a$ and~$b$, and computing the orders of $2$ 
and $3$ in $(\ZZ/p\ZZ)^*$.

We make one final note about our choice of the $m_i$. We would also like the 
number of solutions we have to consider at any given stage to be small. This 
becomes especially critical for the larger values of $n$ that we consider. Our 
choices for $m_i$, especially for small~$i$, reflect this. For example, we have 
chosen $m_4$ to be $241\cdot 433$, which puts us in the balanced case with 
$\#A_j = 2$ for most $j$ and with $\#X = 10$. After this $m_4$, we have
$m_5 = 17$, $m_6 = 2^2$, and $m_7 = 38737$. For smaller values of~$n$, it turns
out that it would be faster to take $m_4 = 433$ (which gives us $\#X = 1$), 
$m_5 = 17$, $m_6 = 2^2$, and then to add in a factor of $241$ before moving on 
to $m_7 = 38737$. According to the heuristic mentioned in Step~Three, the time 
it takes to process a solution in the balanced case is very roughly proportional
to $(\#X \cdot \prod_{j\le n} \#A_j)^{1/2}$, so having $\#X$ equal to $1$
instead of $10$ should speed up this step by a factor of  about $\sqrt{10}$. But
for large~$n$, this improved speed for $i=4$ would be outweighed by the extra
time it would take to process the large number of solutions that would make it 
through to the next step. To simplify our exposition, we have simply given one 
single sequence of $m_i$ to use for all~$n$, optimized for large values of~$n$,
even though different choices would have made the program run faster for 
smaller~$n$.

\section{Proof of Theorem~\ref{T:POWERSOF3}}
\label{S:3}

The proof of  Theorem~\ref{T:POWERSOF3} is also computational, and is 
essentially the same as that of Theorem~\ref{T:POWERSOF2}. The sequence of 
moduli we use is given in~Table~\ref{T:3}, and the time it took to run our 
program for $n$ up to $24$ is given in Table~\ref{T:timing3}. The only other 
comment we make here is that if $n$ is odd and greater than~$1$, then there are
no solutions to equation~\eqref{EQ:sumof3s}, because no power of $2$ (other 
than~$1$) can be written as the sum of an odd number of powers of~$3$.
\qed

\begin{table}
\label{T:3}        
\centering
\caption{Data for the factors $m_i$ and the moduli $M_i = \prod_{j\le i} m_j$ 
         used in the proof of Theorem~\ref{T:POWERSOF3}. The notation in the
         table headings is as in Notation~\ref{N:23}.}
\begin{tabular}{r r r r r c}
\toprule
 $i$ & $m_i$                            & $O_3(m_i)$          & $O_3(M_i)$      & $O_2'(m_i)$            & $v_3(O_2'(M_i))$ \\
\midrule
 $1$ & $2 \cdot 3^4 \cdot 13 \cdot 757$ & $          3^2$     & $          3^2$ & $28    \cdot 3^{3\pz}$ & $\pz3$ \\
 $2$ & $7 \cdot 19 \cdot 37$            & $2   \cdot 3^2$     & $2   \cdot 3^2$ & $4     \cdot 3^{2\pz}$ & $\pz3$ \\
 $3$ & $5 \cdot 73$                     & $2^2 \cdot 3^{\pz}$ & $2^2 \cdot 3^2$ & $4     \cdot 3^{2\pz}$ & $\pz3$ \\
 $4$ & $530713$                         & $2^2 \cdot 3^2$     & $2^2 \cdot 3^2$ & $91    \cdot 3^{6\pz}$ & $\pz6$ \\
 $5$ & $3^3$                            &       ---\pz        & $2^2 \cdot 3^2$ &         ---\pz\pz      & $\pz6$ \\
 $6$ & $41 \cdot 6481$                  & $2^3 \cdot 3^{\pz}$ & $2^3 \cdot 3^2$ & $20    \cdot 3^{4\pz}$ & $\pz6$ \\
 $7$ & $282429005041$                   & $2^3 \cdot 3^2$     & $2^3 \cdot 3^2$ & $66430 \cdot 3^{12}  $ & $  12$ \\
 $8$ & $3^6$                            &       ---\pz        & $2^3 \cdot 3^2$ &         ---\pz\pz      & $  12$ \\
\bottomrule
\end{tabular}
\end{table}

\begin{table}
\label{T:timing3}
\centering
\caption{For each $n$, we list the value of $i$ such that our procedure for 
         solving equation~\eqref{EQ:sumof3s} iterated up to the modulus $M_i$
         from Table~\ref{T:3}. We also give the wall-clock time it took for the
         computation to complete on a 2.8 GHz Quad-Core Intel Core i7 with 
         16GB RAM running Magma V2.23-1 on Mac OS 11.2.3.}
\begin{tabular}{r r c c r r c}
\toprule
$n$  & $i$ & Time (sec) &\hbox to 1em{} &  $n$ & $i$ & Time (sec)\\
\cmidrule{1-3}\cmidrule{5-7} 
 $4$ & $5$ &     $0.01$ &               & $16$ & $8$ & $\phantom{0000}14$ \\
 $6$ & $5$ &     $0.01$ &               & $18$ & $8$ & $\phantom{0000}84$ \\
 $8$ & $5$ &     $0.07$ &               & $20$ & $8$ & $\phantom{000}789$ \\
$10$ & $8$ &     $0.23$ &               & $22$ & $8$ & $\phantom{00}9792$ \\
$12$ & $8$ &     $0.92$ &               & $24$ & $8$ & $          140036$ \\
$14$ & $8$ &     $3.44$ \\
\bottomrule
\end{tabular}
\end{table}

\bibliographystyle{hplaindoi}
\bibliography{elementaryEDE}

\end{document}